\documentclass[12pt]{amsart}
\numberwithin{equation}{section}
\usepackage{amsmath, amssymb}
\usepackage{hyperref}
\newtheorem{theorem}{Theorem}[section]
\newtheorem{corollary}[theorem]{Corollary}

\numberwithin{equation}{section}
\begin{document}

\title [ ]
{Hadamard product of convex functions and Jackson operator }
\author[ K. Piejko, J. Sok\'{o}\l, K. Tr\c{a}bka-Wi\c{e}c\a{l}aw]
       { Krzysztof Piejko$^1$, Janusz Sok\'{o}\l$^{2,*}$, Katarzyna
       Tr\c{a}bka-Wi\c{e}c\a{l}aw$^3$}

\address{$^1$ Rzesz\'{o}w University of Technology,
         Faculty of Mathematics and Applied Physics,
         Al. Powsta\'{n}c\'{o}w Warszawy 12, 35-959 Rzesz\'{o}w,
         Poland}
         \email{piejko@prz.edu.pl}
\address{$^{2}$ University of Rzesz\'{o}w,
                  ul. Prof. Pigonia 1, 35-310 Rzesz\'{o}w, Poland}
         \email{jsokol@ur.edu.pl}
\address{$^3$ Lublin University of Technology,
         Mechanical Engineering Faculty,
         ul. Nadbystrzycka 36, 20-618 Lublin, Poland}
         \email{k.trabka@pollub.pl}

\date{}

\begin{abstract}
In this paper we consider some properties of Jackson's difference
operator for convex univalent functions in $|z|<1$ with complex
parameter $q$ as a Hadamard product of two power series. Jackson in
1908 introduced for a real $q$, $q\in[0,1)$, the difference operator
\mbox{${\rm d}_qf(z)$} for an analytic function $f$ in the unit disc
$|z|<1$ in the complex plane. Thanks to this operator, many
mathematicians have extended the theory of functions in $q$-theory.
The $q$-theory has found many applications in theory of
hypergeometric series, special functions, combinatorics, number
theory, fluid mechanics, quantum mechanics and physics.
\end{abstract}

\subjclass[2000]{Primary 30C45, 47B39, 39A70}

\keywords{analytic functions;  convex functions; starlike functions; difference operator;
$q$-calculus; $q$-starlike; $q$-derivative\\
$^*$ corresponding author: University of Rzesz\'{o}w,
         Faculty of Mathematics and Natural Sciences,
         ul. Prof. Pigonia 1, 35-310 Rzesz\'{o}w, Poland,
         jsokol@ur.edu.pl}


\maketitle


\section{ Introduction}

Let ${\mathbb C}$ denote the open complex plane and $\mathbb D$ be the unit disc,
${{\mathbb D}}:=\{z\in {\mathbb{C}}:\ |z|<1\}$. In this paper we will use
the following well-known definitions and notations. Let ${\mathcal H}$ denote the
class of analytic functions in $\mathbb D$. Let ${\mathcal A}$ be
the subclass of ${\mathcal H}$ consisting of functions normalized by
$f(0)=0$, $f'(0)=1$, i.e.,
\begin{equation}\label{f}
f(z)=z+\sum_{n=2}^{\infty} a_nz^n, \quad z\in\mathbb D
\end{equation}
and let $\mathcal {S}\subset\mathcal{A}$ be the class of functions
which are univalent in ${\mathbb D}$. Let $\mathcal S^*(\alpha)$,
$\alpha\in[0,1)$, denote the class of functions $f\in\mathcal A$ such
that
\begin{equation*}
    \mathfrak{Re}\left\{\frac{zf'(z)}{f(z)}\right\}>\alpha, \quad z\in\mathbb
    D.
\end{equation*}
Functions belonging to the class $\mathcal S^*(\alpha)$ are called
{\emph{starlike univalent functions of order}} $\alpha$, while $\mathcal
S^*=\mathcal S^*(0)$ is called the class of {\emph{starlike univalent
functions (with respect to the origin)}}. This is because if
$f\in\mathcal S^*$, then $f(\mathbb D)$ is a starlike set with
respect to the origin. Recall that a set $E\subset {\mathbb{C}}$ is
said to be starlike with respect to a point $w_0\in E$, if and only
if, the linear segment joining $w_0$ to any other point $w\in E$
lies entirely in $E$. In \cite{Rbs} Robertson extended $\mathcal
S^*$ to $\mathcal S^*(\alpha)$, but the geometric interpretation of
$f(\mathbb D)$ in this class is still an open problem. Let $\mathcal
K$ denote the class of functions $f\in\mathcal A$ such that
\begin{equation*}
    \mathfrak{Re}\left\{1+\frac{zf''(z)}{f'(z)}\right\}>0, \quad z\in\mathbb
    D.
\end{equation*}
Functions belonging to the class $\mathcal K$ are called
{\emph{convex univalent functions}} because
\begin{equation*}
    f\in\mathcal K\Rightarrow [f\in\mathcal S\quad {\rm and}\quad
f(\mathbb D)~{\rm is~a~convex~set}].
\end{equation*}
It is noted that $f\in\mathcal K\Leftrightarrow zf'(z)\in\mathcal S^*$, $z\in \mathbb D$.
For $f(z)=a_0+a_1z+a_2z^2+\ldots$ and $g(z)=b_0+b_1z+b_2z^2+\ldots$
the Hadamard product (or convolution) is defined by $ (f\ast
g)(z)=a_0b_0+a_1b_1z+a_2b_2z^2+\ldots $. In \cite{RusSSl73} it was
proved the famous result that for all $f,g\in \mathcal K$ and $h\in
\mathcal S^*(\alpha)$ we have
\begin{equation}\label{RusSS}
    f\ast g\in \mathcal K\quad {\rm{and}}\quad f\ast h\in \mathcal
    S^*(\alpha).
\end{equation}
In this paper we replace in some  formulas the derivative
$f'$ with the derivative operator ${\rm d}_\zeta f$ which is described in the
next section. We compare some results for $f'$ with
analogous results for ${\rm d}_\zeta f$.

\section{ $\zeta$-derivative operator}

For given $\zeta$, $|\zeta|\leq 1$, we consider the function
$h_\zeta$ of the form
\begin{equation*}
    h_\zeta(z)=\frac{z}{(1-\zeta z)(1-z)}
    = \sum_{n=1}^\infty \frac{1-\zeta^n}{1-\zeta}\ z^n, \quad z\in\mathbb
    D.
\end{equation*}
Then
\begin{equation}\label{hzeta}
    \mathfrak{Re}\left\{\frac{zh'_\zeta(z)}{h_\zeta(z)}\right\}
    =\mathfrak{Re}\left\{1+\frac{z}{1-z}+\frac{\zeta z}{1-\zeta z}\right\}
    >1-\frac{1}{2}-\frac{|\zeta|}{1+|\zeta|}
    =\frac{1-|\zeta|}{2(1+|\zeta|)}\geq 0, \quad z\in\mathbb
    D,
\end{equation}
so $h_\zeta$ is starlike for all complex $\zeta$, $|\zeta|\leq1$. It
is easy to check that if $\zeta =1$, then the function $h_{\zeta}$
becomes the well-known Koebe function
\begin{equation*}
    h_1(z)=\frac{z}{(1-z)^2}=\sum_{n=1}^\infty nz^n,\quad z\in\mathbb D.
\end{equation*}
For each $f\in\mathcal A$ we can express its derivative in terms of
convolution and the Koebe function as
\begin{equation}\label{a1}
    f'(z)= \frac{1}{z} \left\{f(z)*h_1(z)\right\}
    =\frac{1}{z} \left\{f(z)*\frac{z}{(1-z)^2}\right\}, \quad z\in\mathbb
    D.
\end{equation}
 In \cite{PKJS10} the
following generalization of \eqref{a1} for $\zeta\in\mathbb C$,
$|\zeta|\leq 1$, was defined
\begin{equation}\label{a2}
     {\rm d}_\zeta f(z)= \frac{1}{z} \left\{f(z)*h_\zeta(z)\right\}
     =\frac{1}{z}\left\{f(z)*\frac{z}{(1-\zeta z)(1-z)}\right\}.
\end{equation}
For $\zeta=1$ convolution operator \eqref{a2} becomes the
derivative $f'$. For $f$ given by \eqref{f} we have:
\begin{eqnarray}\label{a3}
  {\rm d}_\zeta f(z) &=& \frac{1}{z} \left\{f(z)*\frac{z}{(1-\zeta z)(1-z)} \right\} \\
   &=& \frac{1}{z} \left\{ \left(z+\sum_{n=2}^\infty a_nz^n \right) * \left( \sum_{n=1}^\infty \frac{1-\zeta^n}{1-\zeta}\ z^n \right) \right\} \nonumber \\
   &=& 1+\sum_{n=2}^\infty
   \frac{1-\zeta^n}{1-\zeta}\ a_nz^{n-1} =
   1+\sum_{n=2}^\infty[n]_{\zeta} a_nz^{n-1},  \nonumber
\end{eqnarray}
where
\begin{equation*}
    [n]_{\zeta}=\frac{1-\zeta^n}{1-\zeta}, \quad n=2,3,\ldots \ .
\end{equation*}

A natural question is what kind of results on $f'$ hold also for
$\zeta$-derivative operator ${\rm d}_\zeta f$. It is known that if
$f$  is analytic in $\mathbb D$ and for some real $\alpha$ we have
\begin{equation*}
    \mathfrak{Re} \{\exp(i\alpha)f'(z)\} > 0,\quad z\in \mathbb D,
\end{equation*}
then $f$ is univalent in $\mathbb D$. The unit disc may be replaced
with any convex domain too. This is the well-known
Noshiro-Warschawski univalence condition, \cite{Nosh,Warschawski}.
Such condition does not hold for ${\rm d}_\zeta f$ instead of $f'$,
apart the case $\zeta=1$ when ${\rm d}_\zeta f=f'$. It is known that
$f(z)=z+az^2$ is univalent in $\mathbb D$, if and only if, $|a|\leq
1/2$. Therefore, for given $\zeta$, $|\zeta|<1$, the function
\begin{equation*}
    f(z)=z+\frac{1}{1+\zeta}z^2
\end{equation*}
is not univalent in $\mathbb D$, although
\begin{equation*}
    \mathfrak{Re}\left\{ {\rm d}_\zeta f(z) \right\}=\mathfrak{Re}\left\{1+z \right\}
    >0,\quad z\in \mathbb D.
\end{equation*}

Jackson in \cite{Jakson1908,Jakson1910} introduced and studied the
$q$-derivative, $0\leq q<1$, as
\begin{equation}\label{J1}
    {\rm d}_qf(z)=\frac{f(qz)-f(z)}{(q-1)z}, \quad z\neq 0 \quad \text{and} \quad {\rm d}_qf(0)=f'(0).
\end{equation}
Thus, from \eqref{J1} for a function $f$
given by \eqref{f} we have
\begin{eqnarray}\label{J2}
  z{\rm d}_qf(z) &=&  z+\sum_{n=2}^\infty[n]_{q} a_nz^n,
\end{eqnarray}
where
\begin{equation*}
    [n]_{q}=\frac{1-q^n}{1-q}, \quad n=2,3,\ldots \ .
\end{equation*}
It is clear that for $\zeta=q$, $0\leq q<1$, the $\zeta$-derivative
operator  given in \eqref{a3}, becomes the Jackson $q$-derivative of
$f$ defined in \eqref{J1}. Therefore,  we can also look at Jackson's
$q$-derivative  as a special case of convolution operator \eqref{a2}.
In fact, it follows that the considered convolution operator is an
analytic extension of the Jackson derivative from the segment to the disk.

\noindent {\bf Definition.}\cite{PKJS10,PKJS11} Let $f\in \mathcal
A$. For a given $\zeta$, $|\zeta|\leq 1$, we say that $f$ is in the
class $\mathcal R(\zeta,\alpha)$, $0\leq \alpha<1$, if
\begin{equation}\label{DEF}
    \mathfrak{Re} \left\{\frac{f'(z)}{ {\rm d}_{\zeta}f(z)}\right\} >\alpha,\quad z\in\mathbb D,
\end{equation}
where the operator ${\rm d}_{\zeta}$ is defined in \eqref{a2}.

\noindent {\bf Remark 1.} It is easy to see that $\mathcal
R(1,\alpha)=\mathcal A$.

\noindent {\bf Remark 2.} For the function $h(z)=z/(1-z)$, we have
\begin{equation*}
       \mathfrak{Re} \left\{\frac{h'(z)}{ {\rm d}_{\zeta}h(z)}\right\}
       = \mathfrak{Re} \left\{\frac{1-\zeta z}{1-z}\right\}
       >\frac{1+|\zeta|}{2},\quad z\in\mathbb D,
\end{equation*}
so for a given $\zeta$, $|\zeta|\leq 1$, the function $h\in\mathcal
R(\zeta,(1+|\zeta|)/2)$.

{\bf Remark 3.} For $f$ given by \eqref{f} condition \eqref{DEF} becomes
\begin{equation}\label{DEF2}
        \mathfrak {Re}\left\{ \frac{1+\sum_{n=2}^\infty na_nz^{n-1} }{1+\sum_{n=2}^\infty[n]_{\zeta} a_nz^{n-1} }\right\}>\alpha, \quad z\in\mathbb D.
\end{equation}
\section{Main results}

\begin{theorem}\label{t22}
If $f$ is in the class $\mathcal K$ of convex univalent functions,
then
\begin{equation}\label{1t22}
    \mathfrak{Re} \left\{ \frac{\zeta}{1-\zeta}\left(\frac{f'(z)}{\zeta {\rm d}_{\zeta}f(z)}-1\right) \right\} >\frac{1}{2}
\end{equation}
for all $z\in\mathbb D$ and $\zeta$, $|\zeta|\leq1$. The result is
the best possible.
\end{theorem}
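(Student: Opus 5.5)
The plan is to rewrite the left-hand side of \eqref{1t22} as a two-point difference quotient of $f$, and then apply the classical two-point characterization of convexity.

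\textbf{Step 1 (rewriting).} Since $[n]_\zeta=(1-\zeta^n)/(1-\zeta)$, comparing coefficients in \eqref{a3} gives, for $\zeta\neq 1$,
\begin{equation*}
z\,{\rm d}_\zeta f(z)=f(z)\ast h_\zeta(z)=\frac{f(z)-f(\zeta z)}{1-\zeta}.
\end{equation*}
Substituting this into \eqref{1t22}, the expression under $\mathfrak{Re}$ becomes
\begin{equation*}
\frac{\zeta}{1-\zeta}\left(\frac{(1-\zeta)zf'(z)}{\zeta\,(f(z)-f(\zeta z))}-1\right)
=\frac{zf'(z)}{f(z)-f(\zeta z)}-\frac{\zeta}{1-\zeta}.
\end{equation*}
This form is also regular at $\zeta=0$, where it reduces to $zf'(z)/f(z)$.

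\textbf{Step 2 (key inequality).} I will invoke the Sheil-Small--Suffridge characterization of convexity: $f\in\mathcal K$ if and only if
\begin{equation*}
\mathfrak{Re}\left\{\frac{2zf'(z)}{f(z)-f(w)}-\frac{z+w}{z-w}\right\}>0, \qquad z,w\in\mathbb D .
\end{equation*}
This is the step I regard as the real content, and the main obstacle if it must be proved rather than cited. A self-contained route is the one underlying \eqref{RusSS}. For fixed $w=\zeta z$, write the quotient as a ratio of convolutions $(f\ast k)/(f\ast h_\zeta)$ with $k(z)=z/(1-z)^2$. By \eqref{hzeta}, $h_\zeta$ is starlike, so the Ruscheweyh--Sheil-Small convex-hull lemma applies: for $\varphi\in\mathcal K$, $g\in\mathcal S^*$ and analytic $F$, the values of $(\varphi\ast gF)/(\varphi\ast g)$ lie in the closed convex hull of $F(\mathbb D)$. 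Taking $g=h_\zeta$ and $F=k/h_\zeta=(1-\zeta z)/(1-z)$, the convex hull of $F(\mathbb D)$ is the half-plane
\begin{equation*}
\mathfrak{Re}\,w>\mathfrak{Re}\,\frac{1+\zeta}{2}.
\end{equation*}
Dividing by $1-\zeta$ should reproduce the inequality above, up to checking the rotation.

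\textbf{Step 3 (conclusion).} Put $w=\zeta z$ with $|\zeta|<1$, $z\neq0$. Then $(z+w)/(z-w)=(1+\zeta)/(1-\zeta)$, so Step 2 gives
\begin{equation*}
\mathfrak{Re}\,\frac{zf'(z)}{f(z)-f(\zeta z)}>\mathfrak{Re}\,\frac{1+\zeta}{2(1-\zeta)} .
\end{equation*}
Subtracting $\zeta/(1-\zeta)$ and using
\begin{equation*}
\frac{1+\zeta}{2(1-\zeta)}-\frac{\zeta}{1-\zeta}=\frac12
\end{equation*}
yields \eqref{1t22}. The point $z=0$ follows by continuity, since the expression equals $1$ there. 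For $|\zeta|=1$ with $\zeta\neq1$, I let $\zeta$ tend radially to the boundary to get $\ge 1/2$. Strict inequality then follows from the minimum principle for the harmonic function in $z$: it cannot attain the value $1/2$ inside $\mathbb D$ unless it is constant, and the example below rules out constancy. The case $\zeta=1$ is understood as a limit, or excluded, since the left side is not defined there.

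\textbf{Sharpness.} Take $f(z)=z/(1-z)\in\mathcal K$. Then
\begin{equation*}
f(z)-f(\zeta z)=\frac{(1-\zeta)z}{(1-z)(1-\zeta z)}, \qquad \frac{zf'(z)}{f(z)-f(\zeta z)}=\frac{1-\zeta z}{(1-\zeta)(1-z)} .
\end{equation*}
Hence the expression in \eqref{1t22} equals exactly $1/(1-z)$, whose real part exceeds $1/2$ and tends to $1/2$ as $z\to-1$. So the constant $1/2$ cannot be improved.
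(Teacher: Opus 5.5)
Your proof is correct, but it is organized differently from the paper's. You fix $\zeta$ and recognize Theorem~\ref{t22} as the Sheil-Small--Suffridge two-point inequality specialized to $w=\zeta z$ (so $|w|\le|z|$), which you cite. The paper instead fixes $z$ and treats $\zeta$ as the variable:

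\begin{itemize}
\item It first gets the boundary case $\mathfrak{Re}\{zf'(z)/(f(z)-f(e^{ia}z))\}>0$. This comes from the geometric fact that, along the convex curve $f(\{|w|=|z|\})$, the chord $f(e^{ix}z)-f(z)$ turns counterclockwise by less than $\pi$, followed by a limit $b\to0$.
\item It then shows that $h(\zeta)=f'(z)/((1-\zeta){\rm d}_\zeta f(z))-(1+\zeta)/(2(1-\zeta))$ is analytic on $|\zeta|\le1$, with $\mathfrak{Re}\,h(1)=\mathfrak{Re}\{\frac12+zf''(z)/(2f'(z))\}>0$.
\item It concludes by the minimum principle in $\zeta$.
\end{itemize}

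In effect the paper reproves from elementary geometry exactly the special case of the two-point inequality that you quote, and it covers $\zeta=1$ naturally through $h(1)$. Your version is much shorter and makes clear that the theorem is a reparametrization of a classical characterization of $\mathcal K$. The cost is that all the real content sits in the citation.

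Your convex-hull alternative is in fact the most direct proof, but the half-plane you wrote is wrong for non-real $\zeta$. Since $F(z)=\zeta+(1-\zeta)/(1-z)$, one has $F(\mathbb D)=\{w:\ \mathfrak{Re}\,\frac{w-\zeta}{1-\zeta}>\frac12\}$. Its boundary line passes through $\frac{1+\zeta}{2}$ in the direction $i(1-\zeta)$, so it equals $\{\mathfrak{Re}\,w>\mathfrak{Re}\,\frac{1+\zeta}{2}\}$ only for real $\zeta$. With the correct half-plane nothing more is needed, because the left side of \eqref{1t22} is exactly $\frac{1}{1-\zeta}\bigl(\frac{f'(z)}{{\rm d}_\zeta f(z)}-\zeta\bigr)$. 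The lemma, with $\varphi=f$ and $g=h_\zeta\in\mathcal S^*$ by \eqref{hzeta}, gives $\ge\frac12$ for every $|\zeta|\le1$, $\zeta\ne1$, all at once. Strictness follows because this expression is analytic in $z$ and equals $1$ at $z=0$. This route uses only the machinery behind \eqref{RusSS}, which the paper already cites.

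Three minor points in Step 3:
\begin{itemize}
\item For $|\zeta|=1$, $\zeta\ne1$, the point $w=\zeta z$ still lies in $\mathbb D$ and differs from $z$ when $z\ne0$. So the cited inequality applies directly, and the radial-limit detour is unnecessary.
\item Where you invoke the minimum principle, non-constancy comes from the value $1$ at $z=0$, not from the extremal example $z/(1-z)$.
\item If $\zeta=1$ is to be included as a limit, its value is $1+zf''(z)/(2f'(z))$. Its real part exceeds $\frac12$ by the very definition of $\mathcal K$.
\end{itemize}
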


\begin{proof}

First we prove that for each $f\in\mathcal K$ the following
inequality
\begin{equation}\label{3t22}
    \mathfrak{Re} \left\{\left(\frac{e^{ia}}{1-e^{ia}}-\frac{e^{ib}}{1-e^{ib}}\right)
    \frac{f(e^{ib}z)-f(z)}{f(z)-f(e^{ia}z)} \right\}>0
\end{equation}
holds for all $z\in\mathbb D$ and for all real $a,b$ such that
$0<b<a<\pi$. We have
\begin{eqnarray}\label{4t22}
          \frac{e^{ia}}{1-e^{ia}}-\frac{e^{ib}}{1-e^{ib}} &=& \left(-
      \frac{1}{2}+\frac{i}{2}\cot\frac{a}{2}\right)-\left(-
      \frac{1}{2}+\frac{i}{2}\cot\frac{b}{2}\right)\nonumber \\
       &=& \frac{i}{2}\left(\cot\frac{a}{2}- \cot\frac{b}{2}\right).
\end{eqnarray}
In this way we obtain
\begin{eqnarray*}
    \mathfrak{Re}\left\{\frac{e^{ia}}{1-e^{ia}}-\frac{e^{ib}}{1-e^{ib}}\right\}=0\quad{\rm
    and}\quad
    \mathfrak{Im}\left\{\frac{e^{ia}}{1-e^{ia}}-\frac{e^{ib}}{1-e^{ib}}\right\}<0.
\end{eqnarray*}
Therefore, to calculate the real part in \eqref{3t22}, we need the
imaginary part of the second factor in \eqref{3t22}. The curve
$f(w=|z|)$ is convex so as $x$ runs the interval $[b,a]$ then the
vector $f(e^{ix}z)-f(z)$ turns around $f(z)$ in the counter-clock
vise direction no more than $\pi$. In the other words,
$\arg\{f(e^{ix}z)-f(z)\}$ increases no more than $\pi$. This gives
\begin{equation}\label{5t22}
    \arg\frac{f(e^{ib}z)-f(z)}{f(e^{ia}z)-f(z)}\in(-\pi,0).
\end{equation}
So we have
\begin{equation*}
    \mathfrak{Im} \left\{\frac{f(e^{ib}z)-f(z)}{f(e^{ia}z)-f(z)}
    \right\}<0.
\end{equation*}
From this we have
\begin{align}\label{55t22}
    \mathfrak{Re} \left\{\left(\frac{e^{ia}}{1-e^{ia}}-\frac{e^{ib}}{1-e^{ib}}\right)
    \frac{f(e^{ib}z)-f(z)}{f(z)-f(e^{ia}z)} \right\}\nonumber
   &= -\mathfrak{Im} \left\{\frac{e^{ia}}{1-e^{ia}}-\frac{e^{ib}}{1-e^{ib}}\right\}
    \mathfrak{Im} \left\{\frac{f(e^{ib}z)-f(z)}{f(z)-f(e^{ia}z)} \right\}\nonumber \\
   &=\mathfrak{Im}
    \left\{\frac{e^{ia}}{1-e^{ia}}-\frac{e^{ib}}{1-e^{ib}}\right\}
    \mathfrak{Im} \left\{
    \frac{f(e^{ib}z)-f(z)}{f(e^{ia}z)-f(z)}\right\}\\
   &>0\nonumber
\end{align}
for all $z\in\mathbb D$ and for all real $a,b$ such that
$0<b<a<\pi$. This proves \eqref{3t22}.  Inequality \eqref{3t22} may
be extended to the case for $-\pi<a<b<0$ (in this case both
imaginary parts in \eqref{55t22} becomes positive). Therefore
\eqref{3t22} holds for all $z\in\mathbb D$ and for all real $a,b$,
$0<|b|<|a|<\pi$.

Inequality \eqref{3t22} may be written in the following equivalent
form
\begin{equation}\label{6t22}
    \mathfrak{Re} \left\{
    \frac{f(e^{ib}z)-f(z)}{e^{ib}z-z}\frac{z(e^{ib}-e^{ia})}{(1-e^{ia})(f(z)-f(e^{ia}z))}
    \right\}>0.
\end{equation}
If $b\rightarrow 0$, then \eqref{6t22} becomes
\begin{equation}\label{7t22}
    \mathfrak{Re} \left\{
    f'(z)\frac{z}{f(z)-f(e^{ia}z)}
    \right\}>0
\end{equation}
for all $z\in\mathbb D$ and for all real $-\pi<a<\pi$, $a\neq 0$. It
is clear that \eqref{7t22} holds for $a=\pi$. We shall consider the
function
\begin{equation*}
    h(\zeta)=\frac{f'(z)}{(1-\zeta) {\rm d}_{\zeta}f(z)}-\frac{1+\zeta}{2(1-\zeta)},
\end{equation*}
with  $f\in\mathcal K$. If $f$ is of the form \eqref{f}, then we see
that
\begin{eqnarray*}
    h(\zeta)&=&\frac{1}{1-\zeta}\left\{\frac{f'(z)}{ {\rm
    d}_{\zeta}f(z)}-1\right\}+\frac{1}{2}\\
    &=&\frac{1}{1-\zeta}\frac{\sum_{n=2}^\infty\left(n-(\zeta^{n-1}+\cdots+\zeta+1)\right)a_nz^{n-1}}
    {\sum_{n=1}^\infty(\zeta^{n-1}+\cdots+\zeta+1)a_nz^{n-1}}+\frac{1}{2}\\
    &=&\frac{\sum_{n=2}^\infty\left(\zeta^{n-2}+2\zeta^{n-3}+3\zeta^{n-4}+\cdots+(n-2)\zeta+n-1\right)a_nz^{n-1}}
    {\sum_{n=1}^\infty(1+\zeta+\cdots+\zeta^{n-1})a_nz^{n-1}}+\frac{1}{2},
    \end{eqnarray*}
so the function $h(\zeta)$ is analytic in $|\zeta|\leq 1$.
Furthermore, we have
\begin{eqnarray}\label{8t22}
    \mathfrak{Re}\left\{h(1)\right\}
    &=&\mathfrak{Re}\left\{\frac{\sum_{n=2}^\infty\left(n(n-1)/2\right)a_nz^{n-1}}{\sum_{n=1}^\infty na_nz^{n-1}}+\frac{1}{2}\right\}\nonumber\\
    &=&\mathfrak{Re}\left\{\frac{\sum_{n=2}^\infty n(n-1) a_nz^{n-1}}{\sum_{n=1}^\infty
    2na_nz^{n-1}}+\frac{1}{2}\right\}\nonumber\\
    &=&\mathfrak{Re}\left\{\frac{zf''(z)}{2f'(z)}+\frac{1}{2}\right\}\nonumber\\
    &>&0,
    \end{eqnarray}
because $\mathfrak{Re}\{zf''(z)/2f'(z)\}>-1/2$ for all $f\in\mathcal
K$. We have also
\begin{equation*}
    \mathfrak{Re}\{h(0)\}=\mathfrak{Re}\left\{\frac{-zf'(z)}{f(0)-f(z)}-\frac{1}{2}\right\}
    =\mathfrak{Re}\left\{\frac{zf'(z)}{f(z)}-\frac{1}{2}\right\}>0
\end{equation*}
because convex $f$ is starlike of order $1/2$ too. But
$\mathfrak{Re}\{h(|\zeta|<r)\}$, $r\in(0,1)$, attains its minimum on
$|\zeta|=r$. On the unit circle $|\zeta|=1$, we have by \eqref{8t22}
that $\mathfrak{Re}\left\{h(1)\right\}>0$ and for $\zeta=e^{ia}$,
$\zeta\neq1$, we have by \eqref{7t22}
\begin{equation*}
    \mathfrak{Re}\{h(e^{ia})\}=
    \mathfrak{Re} \left\{
    \frac{zf'(z)}{f(z)-f(e^{ia} z)}
    -\frac{1+e^{ia}}{2(1-e^{ia})}\right\}=\mathfrak{Re} \left\{
    \frac{zf'(z)}{f(z)-f(e^{ia} z)}\right\}>0.
\end{equation*}
This shows that $\mathfrak{Re}\left\{h(\zeta)\right\}>0$ on
$|\zeta|=1$. Therefore, we finally obtain
\begin{equation}\label{9t22}
    \mathfrak{Re}\{h(\zeta)\}>0
\end{equation}
for all $z\in\mathbb D$ and $|\zeta|\leq 1$. This and equality
\begin{equation*}
    h(\zeta)=\frac{f'(z)}{ (1-\zeta){\rm
    d}_{\zeta}f(z)}-\frac{\zeta}{1-\zeta}-\frac{1}{2}
\end{equation*}
give \eqref{1t22}. If we take the convex function $g(z)=z/(1-z)$,
then
\begin{equation*}
    \frac{\zeta}{1-\zeta}\left(\frac{g'(z)}{\zeta {\rm
    d}_{\zeta}g(z)}-1\right)=\frac{1}{1-z}
\end{equation*}
maps $\mathbb D$ onto the half-plane $\mathfrak{Re}\{w\}>1/2$, so
\eqref{1t22} is the best possible in the class $\mathcal K$.
\end{proof}

Note that \eqref{1t22} does not hold in the class of starlike
functions. The function $f(z)=z+z^2/2$ is in $\mathcal S^*$ but
\eqref{1t22} becomes
\begin{equation*}
    \mathfrak{Re} \left\{ \frac{\zeta}{1-\zeta}\left(\frac{1+z}{\zeta (1+(1+\zeta)z/2)}-1\right) \right\} >\frac{1}{2}
\end{equation*}
which evidently does not hold for all $z\in\mathbb D$ and $\zeta$,
$|\zeta|\leq1$.

 A simple calculation yields another form of
\eqref{1t22} in the following corollary.

\begin{corollary}\label{c50}

If $f$ is in the class $\mathcal K$ of convex univalent functions,
then
\begin{equation}\label{1c50}
    \mathfrak{Re} \left\{ \frac{f'(z)}{(1-\zeta) {\rm d}_{\zeta}f(z)} \right\}
    >\mathfrak{Re} \left\{\frac{1+\zeta}{2(1-\zeta)}\right\}
    >0
\end{equation}
and
\begin{equation}\label{2c50}
    \mathfrak{Re} \left\{ \frac{1}{1-\zeta}\left(\frac{f'(z)}{{\rm d}_{\zeta}f(z)}-1\right) \right\} >-\frac{1}{2}
\end{equation}
for all $z\in\mathbb D$ and $\zeta$ such that $|\zeta|\leq1$. The
results are the best possible.
\end{corollary}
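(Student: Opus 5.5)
The plan is to derive both inequalities from Theorem \ref{t22} by rewriting the expression there algebraically, since the theorem already contains the analytic content. In its proof we set
\begin{equation*}
h(\zeta)=\frac{f'(z)}{(1-\zeta){\rm d}_{\zeta}f(z)}-\frac{1+\zeta}{2(1-\zeta)}
=\frac{1}{1-\zeta}\left\{\frac{f'(z)}{{\rm d}_{\zeta}f(z)}-1\right\}+\frac12 ,
\end{equation*}
and proved $\mathfrak{Re}\{h(\zeta)\}>0$ (formula \eqref{9t22}) for all $z\in\mathbb D$ and $|\zeta|\le 1$; at $\zeta=1$ the expressions are read as the analytic continuation described there. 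Inequality \eqref{2c50} is then immediate from the second form of $h$. For the first inequality in \eqref{1c50}, the first form of $h$ gives
\begin{equation*}
\mathfrak{Re}\left\{\frac{f'(z)}{(1-\zeta){\rm d}_{\zeta}f(z)}\right\}>\mathfrak{Re}\left\{\frac{1+\zeta}{2(1-\zeta)}\right\}.
\end{equation*}
Both follow directly from \eqref{9t22}, or equivalently from \eqref{1t22}, because $\frac{\zeta}{1-\zeta}\bigl(\frac{f'}{\zeta{\rm d}_\zeta f}-1\bigr)=\frac{f'}{(1-\zeta){\rm d}_\zeta f}-\frac{\zeta}{1-\zeta}$ and $\frac{\zeta}{1-\zeta}+\frac12=\frac{1+\zeta}{2(1-\zeta)}$.

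The second inequality in \eqref{1c50}, $\mathfrak{Re}\{(1+\zeta)/(2(1-\zeta))\}>0$, holds because the Möbius map $\zeta\mapsto(1+\zeta)/(1-\zeta)$ sends $\mathbb D$ onto the right half-plane. On $|\zeta|=1$, $\zeta\ne1$, this real part equals $0$, since $\frac{1+e^{ia}}{1-e^{ia}}=i\cot\frac a2$. So strict positivity holds only for $|\zeta|<1$; for $|\zeta|=1$ the chain has to be read with $\ge 0$ on the right. I will state this caveat explicitly rather than claim more. It is the only delicate point, and it concerns the statement's formulation rather than the method.

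For sharpness I will use $g(z)=z/(1-z)$, as in the theorem. Here ${\rm d}_\zeta g(z)=1/((1-z)(1-\zeta z))$ and $g'(z)=1/(1-z)^2$, so
\begin{equation*}
\frac{g'(z)}{{\rm d}_\zeta g(z)}=\frac{1-\zeta z}{1-z},\qquad
\frac{1}{1-\zeta}\left(\frac{g'(z)}{{\rm d}_\zeta g(z)}-1\right)=\frac{z}{1-z}.
\end{equation*}
Since $z/(1-z)$ maps $\mathbb D$ onto $\mathfrak{Re}\{w\}>-1/2$, the constant $-1/2$ in \eqref{2c50} cannot be improved. Similarly, $\frac{g'}{(1-\zeta){\rm d}_\zeta g}-\frac{1+\zeta}{2(1-\zeta)}=\frac{z}{1-z}+\frac12$ has infimum of real part equal to $0$, which shows the first inequality in \eqref{1c50} is sharp.
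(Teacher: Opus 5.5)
Your proposal is correct and is essentially the paper's argument: the corollary is obtained there by the same ``simple calculation'' rewriting \eqref{1t22} (equivalently $\mathfrak{Re}\{h(\zeta)\}>0$ from \eqref{9t22}), with sharpness again witnessed by $g(z)=z/(1-z)$, for which $h(\zeta)=\frac{z}{1-z}+\frac12$. Your caveat is also justified: on $|\zeta|=1$, $\zeta\neq1$, one has $\mathfrak{Re}\{(1+\zeta)/(2(1-\zeta))\}=0$, and both sides of \eqref{1c50} are undefined at $\zeta=1$, so the final strict inequality $>0$ in \eqref{1c50} holds only for $|\zeta|<1$, a point the paper's statement glosses over.
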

In \cite[Lemma 2]{PKJS11} it was proved that for $f\in\mathcal K$
\begin{equation}\label{3c50}
    \mathfrak{Re} \left\{ \frac{f'(z)}{(1-\zeta) {\rm d}_{\zeta}f(z)} \right\}
    >0
\end{equation}
for all $z,\zeta\in\mathbb D$. Therefore, \eqref{1c50} improves
earlier result \eqref{3c50}.

\begin{corollary}\label{c550}

If $f$ is in the class $\mathcal K$ of convex univalent functions,
then
\begin{equation}\label{1c550}
    \mathfrak{Re} \left\{ \frac{f'(z)}{(1-\zeta) {\rm d}_{\zeta}f(z)} \right\}
    >\frac{1-|\zeta|^2}{2(1-2\mathfrak{Re}\zeta+|\zeta|^2)}
    =\frac{1-|\zeta|^2}{2|1-\zeta|^2}.
   \end{equation}
The result is the best possible.
\end{corollary}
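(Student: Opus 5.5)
This is a direct consequence of Corollary \ref{c50}, specifically of \eqref{1c50}, which gives
\begin{equation*}
\mathfrak{Re}\left\{\frac{f'(z)}{(1-\zeta){\rm d}_\zeta f(z)}\right\}>\mathfrak{Re}\left\{\frac{1+\zeta}{2(1-\zeta)}\right\}
\end{equation*}
for $f\in\mathcal K$, $z\in\mathbb D$ and $|\zeta|\le 1$. What remains is to evaluate the right-hand side explicitly.

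Multiply numerator and denominator by $1-\bar\zeta$:
\begin{equation*}
\frac{1+\zeta}{2(1-\zeta)}=\frac{(1+\zeta)(1-\bar\zeta)}{2|1-\zeta|^2}=\frac{1-|\zeta|^2+\zeta-\bar\zeta}{2|1-\zeta|^2}.
\end{equation*}
Since $\zeta-\bar\zeta=2i\,\mathfrak{Im}\zeta$ is purely imaginary, taking real parts gives $(1-|\zeta|^2)/(2|1-\zeta|^2)$. Expanding $|1-\zeta|^2=1-2\mathfrak{Re}\zeta+|\zeta|^2$ yields both forms in \eqref{1c550}.

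The case $\zeta=1$ is excluded, or must be read in the limiting sense, because $1-\zeta$ then vanishes in the denominator. For $|\zeta|=1$ with $\zeta\neq1$, the bound is simply $0$.

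For sharpness, I would take $g(z)=z/(1-z)$, the extremal function from Theorem \ref{t22}. Since ${\rm d}_\zeta g(z)=1/((1-z)(1-\zeta z))$, we get
\begin{equation*}
\frac{g'(z)}{(1-\zeta){\rm d}_\zeta g(z)}=\frac{1-\zeta z}{(1-\zeta)(1-z)}.
\end{equation*}
Writing $\frac{1-\zeta z}{1-z}=\zeta+\frac{1-\zeta}{1-z}$, this equals $\frac{\zeta}{1-\zeta}+\frac{1}{1-z}$. As $z\to -1$, the second term tends to $1/2$, so the expression tends to $\frac{\zeta}{1-\zeta}+\frac12=\frac{1+\zeta}{2(1-\zeta)}$. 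Its real part therefore approaches the bound, and the constant cannot be improved.

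The only genuine obstacle would be justifying \eqref{1c50} itself. That inequality rests on Theorem \ref{t22}, which is assumed here, so the remaining work is routine algebra plus this extremal check.
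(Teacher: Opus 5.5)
Your proof is correct and follows the paper's argument. Like the paper, you deduce \eqref{1c550} from \eqref{1c50} by computing $\mathfrak{Re}\{(1+\zeta)/(2(1-\zeta))\}$, and you get sharpness from $z/(1-z)$. Your limit $z\to-1$ is slightly cleaner than the paper's step, which evaluates directly at the boundary point $z=-1\notin\mathbb D$.
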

\begin{proof}
Inequality \eqref{1c550} follows from \eqref{1c50} and from the
known fact that
\begin{equation*}
    \mathfrak{Re} \left\{\frac{1+\zeta}{2(1-\zeta)}\right\}=\frac{1-|\zeta|^2}{2(1-2\mathfrak{Re}\zeta+|\zeta|^2)}
\end{equation*}
for all $\zeta\in\mathbb D$. For convex function $h(z)=z/(1-z)$, we
have
\begin{equation*}
    \left.\mathfrak{Re} \left\{ \frac{h'(z)}{(1-\zeta) {\rm d}_{\zeta}h(z)}
    \right\}\right|_{z=-1}=\mathfrak{Re} \left\{ \frac{1+\zeta}{2(1-\zeta)}
    \right\}
    =\frac{1-|\zeta|^2}{2(1-2\mathfrak{Re}\zeta+|\zeta|^2)}.
   \end{equation*}
This shows that \eqref{1c550} is the best possible.
\end{proof}
If we take a real $q$ instead of $\zeta$ in \eqref{1c50}, then we
immediately obtain the following corollary.


\begin{corollary}\label{c51}

If $f$ is in the class $\mathcal K$ of convex univalent functions
and $q\in[0,1)$, then
\begin{equation}\label{1c51}
    \mathfrak{Re} \left\{\frac{f'(z)}{ {\rm d}_{q}f(z)}\right\} >\frac{1+q}{2}
\end{equation}
for all $z\in\mathbb D$, Equivalently, $f\in\mathcal K$ implies
$f\in\mathcal R(q,(1+q)/2)$. The result is the best possible.
\end{corollary}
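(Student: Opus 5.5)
We will obtain Corollary~\ref{c51} as the special case $\zeta=q\in[0,1)$ of inequality \eqref{1c50} in Corollary~\ref{c50}, together with the observation made after \eqref{J2} that for real $\zeta=q$ the operator ${\rm d}_\zeta f$ coincides with the Jackson derivative ${\rm d}_qf$ from \eqref{J1}.

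First, put $\zeta=q$ in \eqref{1c50}. Since $1-q>0$ is real, it can be taken out of the real part on both sides, so \eqref{1c50} reads
\begin{equation*}
\frac{1}{1-q}\,\mathfrak{Re}\left\{\frac{f'(z)}{{\rm d}_qf(z)}\right\}>\frac{1+q}{2(1-q)} .
\end{equation*}
Multiplying by $1-q>0$ gives \eqref{1c51} for every $z\in\mathbb D$. Comparing with the definition \eqref{DEF}, this says exactly that $f\in\mathcal R(q,(1+q)/2)$. The value $\alpha=(1+q)/2$ lies in $[1/2,1)$, so it is admissible in the definition.

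For sharpness we use the extremal function $g(z)=z/(1-z)\in\mathcal K$, as in Remark~2 and the proof of Theorem~\ref{t22}. Since $g'(z)=1/(1-z)^2$ and ${\rm d}_qg(z)=1/((1-qz)(1-z))$, we get
\begin{equation*}
\frac{g'(z)}{{\rm d}_qg(z)}=\frac{1-qz}{1-z}.
\end{equation*}
As $z\to-1$ along the real axis this tends to $(1+q)/2$. Hence the constant $(1+q)/2$ cannot be replaced by any larger number.

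There is no real obstacle here; the argument reduces to specializing an already established inequality. The only points needing care are that $q$ is real, so $1-q$ is a positive scalar that commutes with $\mathfrak{Re}$, and that the infimum for $g$ is approached but not attained, which is why the inequality is strict.
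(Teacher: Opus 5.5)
Your argument is correct and matches the paper's. The paper obtains the corollary by putting $\zeta=q$ in \eqref{1c50} and cancelling the positive factor $1-q$, and sharpness is witnessed by $z/(1-z)$, for which $f'(z)/{\rm d}_qf(z)=(1-qz)/(1-z)$ as in the proof of Theorem~\ref{t25}; note only that strictness of \eqref{1c51} for every $f\in\mathcal K$ is inherited from the strict inequality \eqref{1c50}, not from the extremal function failing to attain the bound.
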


\noindent {\bf Open problem.} Is it true that if $f$ is in the class
$\mathcal K$ of convex univalent functions, then
\begin{equation}\label{conjecture}
    \mathfrak{Re} \left\{ \frac{f'(z)}{ {\rm d}_{\zeta}f(z)} \right\}
    >\frac{1+|\zeta|}{2}\ ?
\end{equation}
For $q=0$ inequality \eqref{1c51} becomes
\begin{equation*}
    \mathfrak{Re} \left\{\frac{zf'(z)}{ f(z)}\right\} >\frac{1}{2}
\end{equation*}
for all $z\in\mathbb D$, this is a well known result for convex
univalent function $f$.

\begin{corollary}\label{c52}

If $f$ is in the class $\mathcal K$ of convex univalent functions
and $q\in[0,1)$, then there exists an analytic function $p\in
\mathcal H$, $p(0)=1$, such that
\begin{equation}\label{1c52}
    f'(z)=\left(\frac{1-q}{2}p(z)+\frac{1+q}{2}\right) {\rm d}_{q}f(z),\quad z\in\mathbb D
\end{equation}
and
\begin{equation*}
    \mathfrak{Re} \left\{p(z) \right\} >0,\quad z\in\mathbb
    D.
\end{equation*}
\end{corollary}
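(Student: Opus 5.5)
The plan is to define $p$ directly by solving \eqref{1c52} for $p$ and then to read off its properties from Corollary \ref{c51}. For $f\in\mathcal K$ and $q\in[0,1)$ we set
\begin{equation*}
    p(z)=\frac{2}{1-q}\left(\frac{f'(z)}{{\rm d}_qf(z)}-\frac{1+q}{2}\right),\quad z\in\mathbb D .
\end{equation*}
Once we know that $p\in\mathcal H$, identity \eqref{1c52} is just this definition rearranged. The remaining properties follow quickly:
\begin{itemize}
\item Since ${\rm d}_qf(0)=1=f'(0)$, we get $p(0)=\frac{2}{1-q}\left(1-\frac{1+q}{2}\right)=1$.
\item By Corollary \ref{c51}, $\mathfrak{Re}\{f'(z)/{\rm d}_qf(z)\}>(1+q)/2$. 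Multiplying by the positive number $2/(1-q)$ gives $\mathfrak{Re}\{p(z)\}>0$ on $\mathbb D$.
\end{itemize}

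The only point needing care, and the main obstacle, is analyticity of $p$: we must show that ${\rm d}_qf$ has no zeros in $\mathbb D$. There are two cases.

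\begin{itemize}
\item For $q=0$ we have ${\rm d}_0f(z)=f(z)/z$. This is zero-free because $f$ is univalent with $f(0)=0$.
\item For $0<q<1$, formula \eqref{J1} gives ${\rm d}_qf(z)=\frac{f(z)-f(qz)}{(1-q)z}$ for $z\neq0$. Since $qz\neq z$ and $f$ is univalent, the numerator does not vanish. At $z=0$ the value is $1$.
\end{itemize}

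Hence $f'/{\rm d}_qf$ is analytic in $\mathbb D$, and so is $p$. This nonvanishing is in any case implicit in Corollary \ref{c51}, whose inequality already requires the quotient to be defined.

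Alternatively, the existence of $p$ could be phrased through the standard fact that a function $w$ with $\mathfrak{Re}\,w>\alpha$ and $w(0)=1$ can be written as $w=(1-\alpha)p+\alpha$ with $\mathfrak{Re}\,p>0$. Here one takes $\alpha=(1+q)/2$. This is exactly the computation above, so no further machinery is needed.
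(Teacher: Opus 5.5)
Your proposal is correct and follows the paper's proof. The paper also takes $p=\bigl(2f'/{\rm d}_qf-(1+q)\bigr)/(1-q)$ and reads off $p(0)=1$ and $\mathfrak{Re}\{p\}>0$ from Corollary~\ref{c51}. Your explicit check, via univalence of $f$, that ${\rm d}_qf$ has no zeros in $\mathbb D$ is correct, and it fills in a step the paper leaves implicit.
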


\begin{proof}
From \eqref{1c51}, we have
\begin{equation*}
    \mathfrak{Re} \left\{\frac{\frac{2f'(z)}{ {\rm d}_{q}f(z)}-(1+q)}{1-q}\right\}
    >0,\quad z\in\mathbb
    D.
\end{equation*}
Therefore, the function in the brackets is an analytic function $p$
with positive real part in $\mathbb D$ with normalization $p(0)=1$.
This gives \eqref{1c52}.
\end{proof}



\begin{theorem}\label{t25}
If $f$ is in the class $\mathcal K$ of convex univalent functions,
then
\begin{equation}\label{1t25}
    \frac{1+qr}{1+r}\leq\mathfrak{Re} \left\{ \frac{f'(z)}{ {\rm d}_{q}f(z)} \right\}
    \leq \frac{1-qr}{1-r}
\end{equation}
and
\begin{equation}\label{3t25}
    \frac{1+qr}{1+r}\leq \left| \frac{f'(z)}{ {\rm d}_{q}f(z)}
    \right |
    \leq \frac{1-qr}{1-r}
\end{equation}
for all  $z, q$ such that $|z|=r<1$, $q\in[0,1)$. The result is
sharp.
\end{theorem}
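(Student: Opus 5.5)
Corollary \ref{c52} is the natural starting point: for $f\in\mathcal K$ and $q\in[0,1)$ we have
\begin{equation*}
\frac{f'(z)}{{\rm d}_q f(z)}=\frac{1-q}{2}p(z)+\frac{1+q}{2},
\end{equation*}
with $p$ analytic in $\mathbb D$, $p(0)=1$, $\mathfrak{Re}\,p>0$. Set $F(z)=f'(z)/{\rm d}_qf(z)$. Then $F(0)=1$, and by Corollary \ref{c51} $F$ is subordinate to the Möbius map
\begin{equation*}
L(z)=\frac{1-q}{2}\,\frac{1+z}{1-z}+\frac{1+q}{2}=\frac{1-qz}{1-z},
\end{equation*}
which sends $\mathbb D$ onto the half-plane $\mathfrak{Re}\,w>(1+q)/2$. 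So I will write $F=L\circ\omega$ with $\omega$ a Schwarz function, $|\omega(z)|\le |z|=r$.

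By subordination, for $|z|=r$ the value $F(z)$ lies in the closed disc $L(\{|w|\le r\})$. Since $L$ has real coefficients, this image disc is symmetric about the real axis. Its diameter on the real line is the segment between
\begin{equation*}
L(-r)=\frac{1+qr}{1+r},\qquad L(r)=\frac{1-qr}{1-r}.
\end{equation*}
Both endpoints are positive, and $L(-r)<L(r)$ because $L$ is increasing on $(-1,1)$: indeed $L'(x)=(1-q)/(1-x)^2>0$.

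For a closed disc symmetric about $\mathbb R$ with real diameter $[m,M]$ and $0<m$, every point $w$ satisfies $m\le \mathfrak{Re}\,w\le M$. Moreover $m\le|w|\le M$: the closest and farthest points of the disc from the origin lie on the line through the centre and $0$, which is the real axis. Together these give \eqref{1t25} and \eqref{3t25} at once.

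The step needing the most care is verifying that the image of the closed disc $|w|\le r$ is exactly the disc with diameter $[L(-r),L(r)]$. I would do this by computing the centre $(1-qr^2)/(1-r^2)$ and radius $(1-q)r/(1-r^2)$, or simply by noting that Möbius maps send circles to circles and preserve the reflection symmetry.

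For sharpness, I take $f(z)=z/(1-z)$. Then $f'(z)=1/(1-z)^2$, and ${\rm d}_qf(z)=1/((1-z)(1-qz))$ since $[n]_q$ is the $n$-th coefficient of $z/((1-z)(1-qz))$. Hence $F(z)=(1-qz)/(1-z)=L(z)$. At $z=\pm r$ this gives equality in both the lower and upper bounds of \eqref{1t25} and \eqref{3t25}.
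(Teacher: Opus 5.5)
Your proposal is correct and follows essentially the paper's route: Corollary~\ref{c51} gives the subordination $f'/{\rm d}_qf\prec (1-qz)/(1-z)$, the bounds are read off from the image of $|w|\le r$ under this M\"obius map, and $z/(1-z)$ gives sharpness. Your explicit description of that image as the disc with centre $(1-qr^2)/(1-r^2)$ and radius $(1-q)r/(1-r^2)$ fills in what the paper leaves as ``it is known'' for the real-part bounds and as ``the same argumentation'' for the modulus bounds.
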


\begin{proof}
From \eqref{1c51}, we have
\begin{equation*}
    \frac{f'(z)}{ {\rm d}_{q}f(z)}\prec \frac{1+(1-2\beta)z}{1-z},
    \quad \beta=\frac{1+q}{2}.
\end{equation*}
This gives
\begin{equation}\label{2t25}
    \frac{f'(z)}{ {\rm d}_{q}f(z)}\prec \frac{1-qz}{1-z}.
\end{equation}
It is known that
\begin{equation*}
    \frac{1+qr}{1+r}\leq\mathfrak{Re} \left\{ \frac{1-qz}{1-z} \right\}
    \leq \frac{1-qr}{1-r}
\end{equation*}
for all  $z, q$, $|z|=r<1$, $q\in[0,1)$. This shows \eqref{1t25}.
For convex function $h(z)=z/(1-z)$, we have
\begin{equation*}
    \frac{h'(z)}{ {\rm d}_{q}h(z)}= \frac{1-qz}{1-z}.
\end{equation*}
This shows the sharpness of \eqref{1t25}. The same argumentation
shows that subordination \eqref{2t25} implies \eqref{3t25}.
\end{proof}

\begin{theorem}\label{t26}
If $f$ is in the class $\mathcal K$ of convex univalent functions,
then
\begin{equation}\label{1t26}
        \mathfrak{Re}
        \left\{\frac{f(z)}{ \left[\log\frac{1}{1-z}\right]
        *z{\rm d}_{q}f(z)}\right\} >\frac{1+q}{2}
\end{equation}
for all  $z, q$ such that  $z\in\mathbb D$, $q\in[0,1)$.
\end{theorem}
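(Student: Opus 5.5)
The plan is to recognise the denominator in \eqref{1t26} as a $q$-derivative of an auxiliary function, and then apply Corollary \ref{c51} to that function. Write $f$ in the form \eqref{f}. Since $\log\frac{1}{1-z}=\sum_{n=1}^\infty \frac{z^n}{n}$ and, by \eqref{J2}, $z{\rm d}_qf(z)=z+\sum_{n=2}^\infty [n]_q a_nz^n$, the convolution is
\begin{equation*}
    \left[\log\frac{1}{1-z}\right]*z{\rm d}_{q}f(z)=z+\sum_{n=2}^\infty [n]_q\frac{a_n}{n}z^n .
\end{equation*}

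Next I introduce the Alexander transform of $f$,
\begin{equation*}
    F(z)=\int_0^z\frac{f(t)}{t}\,dt=z+\sum_{n=2}^\infty \frac{a_n}{n}z^n, \quad z\in\mathbb D,
\end{equation*}
which belongs to $\mathcal A$. Again by \eqref{J2}, the displayed convolution equals $z{\rm d}_qF(z)$. On the other hand, $zF'(z)=f(z)$. Hence the quantity in \eqref{1t26} is
\begin{equation*}
    \frac{f(z)}{\left[\log\frac{1}{1-z}\right]*z{\rm d}_{q}f(z)}=\frac{zF'(z)}{z{\rm d}_qF(z)}=\frac{F'(z)}{{\rm d}_qF(z)} .
\end{equation*}
At $z=0$ both sides equal $1$, by continuity.

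It remains to check that $F\in\mathcal K$. Differentiating $zF'(z)=f(z)$ gives
\begin{equation*}
    1+\frac{zF''(z)}{F'(z)}=\frac{zf'(z)}{f(z)}.
\end{equation*}
Every $f\in\mathcal K$ is starlike (indeed of order $1/2$, as recalled in the proof of Theorem \ref{t22}), so the right-hand side has positive real part. Therefore $F\in\mathcal K$.

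Applying Corollary \ref{c51} to $F$ yields $\mathfrak{Re}\{F'(z)/{\rm d}_qF(z)\}>(1+q)/2$ for all $z\in\mathbb D$, which is exactly \eqref{1t26}. The only step requiring care is the identification of the convolution with $z{\rm d}_qF$. This is a coefficientwise check, since the coefficient of $z^n$ on both sides is $[n]_q a_n/n$, so I expect no real obstacle.
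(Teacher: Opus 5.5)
Your proof is correct, but it takes a different route from the paper. The paper keeps $f$ itself and applies the lemma \eqref{2t26} from Goodman's book to $N=f$ and $D=\left[\log\frac{1}{1-z}\right]*z{\rm d}_qf$. Then $N'/D'=f'/{\rm d}_qf$, which is controlled by Corollary \ref{c51} applied to $f$. This requires checking that $D$ is starlike, which the paper does with the Ruscheweyh--Sheil-Small theorem \eqref{RusSS}: $f*h_q$ is starlike, and convolving it with the convex function $\log\frac{1}{1-z}$ keeps it starlike.

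You instead recognise the whole quotient as $F'/{\rm d}_qF$, where $F=f*\log\frac{1}{1-z}$ is the Alexander transform, and apply Corollary \ref{c51} to $F$. The only extra input is Alexander's relation $1+zF''/F'=zf'/f$. Besides being shorter and avoiding both \eqref{RusSS} and \eqref{2t26}, your argument uses only $f\in\mathcal S^*$. So it proves \eqref{1t26} for every starlike $f$, and on $\mathcal S^*$ the constant $(1+q)/2$ is sharp: for the Koebe function $f(z)=z/(1-z)^2$ one gets $F(z)=z/(1-z)$ and $F'/{\rm d}_qF=(1-qz)/(1-z)$.

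The paper's route cannot reach $\mathcal S^*$. It passes through $\mathfrak{Re}\{f'/{\rm d}_qf\}>(1+q)/2$, which fails for the starlike function $z+z^2/2$ as $z\to-1$. Your reformulation also explains why no sharpness is claimed for $\mathcal K$. When $f\in\mathcal K$, the transform $F$ is convex of order $1/2$, a proper subclass of $\mathcal K$. For instance, for $q=0$ the quotient is $zF'/F$, and the best constant on $\mathcal K$ is $1/(2\log 2)>1/2$.
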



\begin{proof}
It is known, see \cite[p.171]{Goodman-Book}, that if $N\in\mathcal
H$, $D\in\mathcal H$, $N(0)=D(0)=0$ and $D$ is starlike univalent in
$\mathbb D$, then
\begin{equation}\label{2t26}
    \mathfrak{Re} \left\{\frac{N'(z)}{D'(z)}\right\}>\alpha \quad
    \Rightarrow\quad\mathfrak{Re} \left\{\frac{N(z)}{D(z)}\right\}>\alpha
\end{equation}
for all  $z\in\mathbb D$. From \eqref{1c51} we have
\begin{equation}\label{3t26}
    \mathfrak{Re} \left\{\frac{f'(z)}{ {\rm d}_{q}f(z)}\right\}
    >\frac{1+q}{2}.
\end{equation}
On the other hand, we have
\begin{eqnarray*}
  {\rm d}_{q}f(z) &=& \sum_{n=1}^\infty[n]_{q} a_nz^{n-1}\\
  &=&\left\{\sum_{n=1}^\infty[n]_{q} a_nz^{n}
  *\sum_{n=1}^\infty\frac{1}{n}z^{n}\right\}'
   \\
   &=& \left\{z{\rm d}_{q}f(z)*\log\frac{1}{1-z}\right\}',
\end{eqnarray*}
where $z\mapsto\log\frac{1}{1-z}$ is convex and $z{\rm
d}_{q}f(z)=f(z)*h_q(z)$ is starlike by \eqref{RusSS} and
\eqref{hzeta}. Now, applying \eqref{2t26} and \eqref{3t26} give us
\eqref{1t26}.
\end{proof}
\section{Ethical Approval}
Not applicable.
\section{Competing interests}
The authors declare they have no financial interests.
The authors have no conflicts of interest to declare that are relevant to the content of this article.
\section{Authors' contributions}
The   authors declare they have equal contribution in the paper.
\section{Data availability}
Data sharing is not applicable to this article as no datasets were generated or analysed during the current study.
\section{Acknowledgements}

The third author is supported by the scientific project
FD-20/IM-5/126
funded by  Lublin University of Technology..

\end{document}